\newtheorem{definition}{Definition}
\newtheorem{theorem}{Theorem}
\newtheorem{proposition}{Proposition}
\newtheorem{remark}{Remark}
\begin{document}
\sf
\title{Sojourn functionals for spatiotemporal Gaussian random fields with long--memory}

\date{}
\author{\normalsize N. Leonenko and M. D. Ruiz--Medina}
\maketitle

\begin{abstract}This paper addresses the asymptotic analysis of sojourn functionals of spatiotemporal Gaussian random fields with long-range dependence
(LRD) in time also known as long memory. Specifically, reduction theorems are derived for local functionals of nonlinear transformation of such  fields, with Hermite rank $m\geq 1,$ under general covariance structures.   These results are proven to hold, in particular,  for a family of non--separable covariance structures belonging to  Gneiting class.
For $m=2,$ under separability of the spatiotemporal covariance function in space and time,   the properly normalized Minkowski functional, involving the modulus of a Gaussian random field,  converges in distribution   to the Rosenblatt type limiting distribution for a suitable range of the long memory parameter.\end{abstract}

\medskip

\noindent \emph{Keywords}: Asymptotic normality; excursion sets; LRD; Rosenblatt--type distribution;  spatiotemporal random fields.

\noindent \emph{AMS Classification}: 60G60; 60G15; 60F05; 60D05

\section{Introduction} 

\label{intro}
Geometric characteristics of random surfaces play a critical role in areas such as geostatistics, environmetrics, astrophysics, and medical imaging.
There exists an extensive literature on data analysis based on Gaussian random field modeling. Minkowski functionals have played an important role  in the geometrical analysis of their sample paths.   In Novikov, Schmalzing  and   Mukhanov
 \cite{Novikov00}, Minkowski  functionals are  applied to
the characterization of hot regions (i.e., the excursion sets),
where the normalized temperature fluctuation field exceeds a given
threshold. The normalized temperature fluctuation field, associated
with CMB temperature on the sky, is represented in terms of  a
spherical  random field (see also Linde and  Mukhanov \cite{Linde}; Novikov, Schmalzing  and   Mukhanov \cite{Novikov00}).
 Furthermore, Minkowski functionals are
attractive  due to  their  geometrical interpretation in two
dimensions, in relation to the \rm{total area of all hot regions},
\rm{the total length of the boundary between hot and cold regions},
and the \rm{Euler characteristic}, which counts the number of
isolated hot regions minus the number of isolated cold regions.
Minkowski functionals have also been applied to brain mapping analysis, and, in general, to the description of texture models in medical  imaging analysis, in relation to anatomy segmentation, and pathology detection and diagnosis (see, e.g., Steele \cite{Steele}). Truncated Gaussian processes or sequential indicator simulation play a crucial role in geosciences  to model the spatial distribution of the materials. Here, Minkowski functionals are used as morphological measures (see, e.g., Mosser, Dubrule and Blunt  \cite{MosserDubruleBlunt2017}; Pyrcz and  Deutsch \cite{Pyrz}). In that sense, a wide research area has been developed in the multiscale analysis  of media with complex internal structures (see Armstrong \emph{et al.} \cite{ArmstrongRT}), including soils, sedimentary rocks, foams, ceramics and composite materials
(see, e.g., Gregorov\'a \emph{et al.} \cite{Gregorova}; Ivonin \emph{et al.} \cite{Ivonin}; \cite{Pabst},	and Tsukanov \emph{et al.} \cite{Tsukanov}).  Also, a  good overview and introduction to some of these  applications can be found in Adler and  Taylor \cite{AdlerTaylor07}  and Marinucci and  Peccati \cite{Marinucci}.

   Since the nighties sojour functionals were  extensively analyzed in the context of weak--dependent random fields (see, e.g., Bulinski \emph{et al.} \cite{Bulinski};  Ivanov and Leonenko \cite{Ivanov89}, among others). A parallel literature has also been developed in the long--range dependence random field  context (see  Leonenko \cite{Leonenko99}; Leonenko and Olenko \cite{LeonenkoOlenko14}; Makogin and Spodarev \cite{Makogin20};
Marinucci, Rossi and Vidotto \cite{MarinucciRV},   just to mention a few).  Particularly, limit theorems for level functionals of
stationary Gaussian processes and fields constitute a major topic in this literature (see, e.g., Az\"ais and Wschebor \cite{Wschebor09};  Estrade  and  Le\'on \cite{Estrade16};  Kratz and  Le\'on \cite{KratzLeon}; \cite{KratzLeon2}; Marinucci and Vadlamani \cite{Marinucciv2}). The approach adopted in this paper  continues this research line.

There  has been a growing interest on covariance function modeling for spatiotemporal random fields.  Marinucci, Rossi and Vidotto \cite{MarinucciRV}  consider isotropic in space and stationary in time  Gaussian random fields on
the two--dimensional unit sphere, and investigate the asymptotic behaviour of the empirical measure or excursion area, as time goes to infinity, covering both cases when the underlying field exhibits short and long memory in time. It turns out that the limiting distribution is not universal, depending both of the memory parameter and the threshold or level of sojourn functional. Marinucci, Rossi and Vidotto \cite{MarinucciRV} adopt an  intrinsic spherical isotropic random field   methodology based on  Karhunen-Lo\'eve expansion in terms of  spherical harmonics. As given in their Condition 2, a semiparametric model characterizes the resulting stationary time-varying  angular spectrum involving a memory parameter  depending on the spatial resolution level.  As reflected in  their Condition 3, the smallest exponent corresponding to the largest memory range, and the exponent at the coarsest spatial scale $l=0$ are involved in the scaling to determine   the asymptotic  variance in time of the sojourn functional. Different scenarios are considered,  distinguishing between null and non--null threshold parameter.  Under these scenarios, one can find   the first, second or third chaos domination, respectively leading to Gaussian, and non--Gaussian (so--called composite
Rosenblatt 2 and 3) asymptotic probability distributions.

This paper analyzes the asymptotic behavior in time  of local nonlinear functionals of LRD Gaussian random fields restricted to a spatial  convex compact set. Specifically,  the spectral diagonalization of isotropic continuous covariance kernels on sphere, in terms of spherical harmonics,  applied in  Marinucci, Rossi and Vidotto \cite{MarinucciRV},  is replaced here by the  isonormal representation of a homogeneous and isotropic spatiotemporal Gaussian random field. Our main result, reduction principle given in Theorem \ref{th3}, holds beyond the first Minkowski  functional.   The particular cases of this general reduction principle analyzed in Theorems \ref{th1} and \ref{th2} could be extended to the more general framework of spatial frequency varying long--memory parameters in time, in the spirit of Marinucci, Rossi and Vidotto \cite{MarinucciRV} results. The  same assertions hold regarding   Proposition \ref{p41}  below, derived in a separable covariance  framework in space and time, that will be extended to the non--separable case in a subsequent paper. Note  that the nonseparable covariance modeling assumed in   Marinucci, Rossi and Vidotto \cite{MarinucciRV} is given in terms  of the tensorial product of a spatial basis (spherical harmonics), and a  temporal basis (complex exponentials) that do not provide a diagonal representation. While Proposition \ref{p41} works under the diagonal representation in terms of complex exponentials of the spatiotemporal covariance function of the underlying Gaussian random field.

To focus the topic and better describe the contributions of this work, we have to noting that our starting model is  a spatially homogeneous   and isotropic Gaussian random field, displaying stationarity and LRD  in time, defined on $\mathbb{R}^{d}\times \mathbb{R}.$ Its restriction to  a convex compact set in space is then considered. An increasing sequence of temporal intervals is involved in the  increasing domain asymptotic approach  adopted. Note that our methodology is applicable, in  particular, to considering the  restriction to a  compact two--points homogeneous space,  like the sphere, of our original family of spatiotemporal Gaussian random fields on $\mathbb{R}^{d}\times \mathbb{R}$  (see, e.g., Leonenko and Ruiz--Medina  \cite{Leonenkoetal17a}).

We present a general reduction principle (Theorem \ref{th3}), discovered first by Taqqu \cite{Taqqu} (see also Dobrushin and Major \cite{Dobrushin}; Leonenko, Ruiz--Medina and Taqqu \cite{Leonenkoetal17}; \cite{Leonenkoetal17b}; Taqqu \cite{Taqqu2}), obtaining the limiting distributions of properly normalised integrals of non-linear transformations of spatiotemporal Gaussian random fields, from the asymptotic distribution of Hermite polynomial type functionals of such Gaussian random fields. The method of the proof is standard. Indeed, we use the expansion of the local functional of a Gaussian field into series of Hermite polynomials of such a field. But the novelty of the paper is that we consider spatiotemporal random fields beyond the regularly varying condition on
the spatiotemporal  covariance function.  Hence, we can analyze  a larger class of spatiotemporal covariance functions, including    Gneiting  class (see Gneiting \cite{Gneiting02}). This class of covariance functions is popular in many applications, including Meteorology or Earth sciences, among  others.
 Theorems \ref{th1} shows that, under very general conditions on the decaying of covariance function to zero in time,
the limiting distribution of normalized first Minkowski functional is asymptotically normal for large classes of covariances, including  Gneiting class.
   For the modulus of a Gaussian random field, the limiting distribution is given in the form of a multiple Wiener--It$\widehat{\mbox{o}}$ stochastic integral, assuming separability in  space and time  of the  covariance function. We also assume that the covariance function  is a regularly varying function in time. The derived limiting distribution is of Rosenblatt type.

The outline of the paper is as follows. We first review some results on geometric probabilities in Section \ref{GP}.
 The general reduction theorem, Theorem \ref{th3}, for
subordinated Gaussian spatiotemporal random fields with LRD in time is presented in Section \ref{secrth}. These results are applied to sojourn functionals introduced in Section \ref{sfunct},  providing the asymptotic normality of the first Minkowski functional of a Gaussian random field, and limiting distribution of Rosenblatt type, for the sojourn functional, given by the modulus of a Gaussian random field.
 In Section \ref{scfb} we provide  examples in terms of separable covariance structures. While in  Section \ref{nscf} we present examples of covariance structures for which main results hold for non-separable covariance structures. We restrict out exposition by the covariances known as Gneting class of covariance functions.

\section{Geometric probability}
\label{GP}
Some fundamental elements and basic results on geometric probability are now introduced (see Ahronyan
and Khlatayan   \cite{Aharonyan20};  Ivanov and Leonenko \cite{Ivanov89}; Lellouche and Souries \cite{Souris2020};  Lord \cite{Lord54}, and the references therein).

Let $\nu_{d}$ be the Lebesgue measure on $\mathbb{R}^{d},$  $d\geq 1,$ and $\mathcal{K}$ be a convex body in $\mathbb{R}^{d},$ i.e., a compact convex set with non empty interior. We will denote by $\mathcal{D}(\mathcal{K})= \left\{\max\|\mathbf{x}-\mathbf{y}\|,\ \mathbf{x},\mathbf{y}\in \mathcal{K}\right\}$ the diameter of $\mathcal{K}.$ Let $\nu_{d}(\mathcal{K})=|\mathcal{K}|$ be the volume of $\mathcal{K},$ and  for $d\geq 2,$ $\nu_{d-1}(\delta \mathcal{K})=\mathcal{U}_{d-1}(\mathcal{K})$ be the surface area of $\mathcal{K},$ where $\delta \mathcal{K}$ denotes the boundary of $\mathcal{K}.$  For $d=1,$ we put $\mathcal{U}_{0}(\mathcal{K})=0.$  For example, let $\mathcal{K}=\mathcal{B}(1)=\left\{\mathbf{x}\in \mathbb{R}^{d};\ \|\mathbf{x}\|\leq 1\right\}$ be  the unit ball. Hence,  $\delta \mathcal{K}= \delta \mathcal{B}(1)=\mathcal{S}_{d-1}=\left\{ \mathbf{x}\in \mathbb{R}^{d};\ \|\mathbf{x}\|= 1\right\}$ is the unit sphere. Thus,  \begin{equation}\mathcal{D}(\mathcal{B}(1))=2,\quad \left|\mathcal{B}(1)\right|=\frac{\pi^{d/2}}{\Gamma \left(\frac{d}{2}+1\right)},\ \mathcal{U}_{d-1}\left(\mathcal{B}(1)\right)=\left|\mathcal{S}_{d-1}\right|=\frac{2\pi^{\frac{d}{2}}}{\Gamma \left(\frac{d}{2}\right)}.\label{b1}\end{equation}

Let $\mathcal{Q}$ be the \emph{stellate} space  in $\mathbb{R}^{d},$ and $d\Gamma $ is an element of a locally finite measure in the space  $\mathcal{Q},$
which is invariant with respect to the group $\mathcal{M}$ of all Euclidean motions in the space $\mathbb{R}^{d}.$ Let now consider a chord length distribution function of body $\mathcal{K},$  given by
\begin{equation}
F_{\mathcal{K}}(v)=\frac{2(d-1)}{\left|\mathcal{S}_{d-2}\right|}\int_{\chi(\Gamma )\leq v}d\Gamma,
\label{cld}\end{equation}
\noindent where $\chi(\Gamma )=\Gamma \cap\mathcal{K}$ is  a chord in $\mathcal{K}.$  For example,  if $\mathcal{K}=\mathcal{B}(1),$ then
\begin{eqnarray}
F_{\mathcal{B}(1)}(v)=\left\{\begin{array}{l}0,\quad v\leq 0\\
1-\left(1-\left(\frac{v}{2}\right)^{2}\right)^{\frac{d-1}{2}},\quad 0\leq v\leq 2\\
1,\quad v\geq 2\\
\end{array}\right.
\label{fdb1}
\end{eqnarray}

\noindent (see Ahoronyan
and Khalatyan \cite{Aharonyan20} for details).

  Let now  consider  two  points  $P_{1},P_{2}\in \mathcal{K}$ randomly and independently selected, with uniform distribution in $\mathcal{K}.$
We consider  the probability density $\psi_{\rho_{\mathcal{K}}}$ of the random variable $\rho_{\mathcal{K}}=\left\|P_{1}-P_{2}\right\|,$  given by
 $$\psi_{\rho_{\mathcal{K}}}(z)=\frac{d}{dz}\mathbb{P}\left(\rho_{\mathcal{K}}\leq z\right). $$
 In the particular case $\mathcal{K}=[-1,1],$ $d=1,$ we have  $$\psi_{\rho_{\mathcal{K}}}(u)=1-\frac{u}{2},\quad 0\leq u\leq 2,$$
 \noindent while for $d\geq 2$  (see  Ivanov and Leonenko, 1989; Lord, 1954)
 \begin{equation}
 \psi_{\rho_{\mathcal{B}(1)}}(z)=\mathcal{I}_{1-\left(\frac{z}{2}\right)^{2}}\left(\frac{d+1}{2},\frac{1}{2}\right),\quad 0\leq z\leq 2,
 \label{pdb1}
 \end{equation}

 \noindent where  $\mathcal{I}_{\mu}(p,q)$ denotes the incomplete Beta function, given by
 \begin{equation}\mathcal{I}_{\mu}(p,q)=\frac{\Gamma (p+q)}{\Gamma (p)\Gamma (q)}\int_{0}^{\mu}t^{p-1}(1-t)^{q-1}dt,\quad \mu \in [0,1].
 \end{equation}

 It is also known (see, e.g., equation  (2.6) in Ahoronyan
and Khalatyan \cite{Aharonyan20}) that
\begin{eqnarray}
\psi_{\rho_{\mathcal{K}}}(z)&=&\frac{1}{|\mathcal{K}|^{2}}\left[z^{d-1}\left| \mathcal{S}_{d-1}\right|\left| \mathcal{K}\right|\right.\nonumber\\
&-& \left. z^{d-1}|\mathcal{S}_{d-2}|\mathcal{U}_{d-1}(\mathcal{K})\frac{1}{d-1}\int_{0}^{z}\left(1-F_{\mathcal{K}}(v)\right)
dv\right],\quad 0\leq z\leq \mathcal{D}(\mathcal{K}).\nonumber\\\label{fak20}
\end{eqnarray}

In particular, for the ball  $\mathcal{K}=\mathcal{B}(1),$  we obtain an alternative to equation (\ref{pdb1}), given by,  for $0\leq z\leq 2,$
\begin{equation}
\psi_{\rho_{\mathcal{B}(1)}}(z)=z^{d-1}\left[\frac{2\Gamma \left(\frac{d}{2}+1\right)}{\pi^{\frac{2d-1}{2}}}-\frac{4\Gamma \left(\frac{d}{2}+1\right)}{\pi^{\frac{d-1}{2}}\Gamma \left(\frac{d+1}{2}\right)(d-1)}\int_{0}^{z}\left(1-\left(\frac{u}{2}\right)^{2}\right)^{\frac{d-1}{2}}du\right].
\label{af4}
\end{equation}

 \section{Reduction theorems for spatiotemporal random fields with LRD in time}
 \label{secrth}

 Let $(\Omega ,\mathcal{A},\mathbb{P})$ be the basic probability space, where the random components of the spatiotemporal
real--valued Gaussian random field $\{Z(\mathbf{x},t),\ \mathbf{x}\in \mathbb{R}^{d},\ t\in \mathbb{R}\}$ are defined. That is,
$Z:(\Omega ,\mathcal{A},\mathbb{P})\times \mathbb{R}^{d}\times \mathbb{R}\to \mathbb{R}.$

\medskip

 \noindent \textbf{Condition 1}. Let $Z$ be a measurable mean--square continuous homogeneous and isotropic in space, and stationary in time  Gaussian random field with $\mathbb{E}[Z(\mathbf{x},t)]=0,$ $\mathbb{E}[Z^{2}(\mathbf{x},t)]=1,$ and covariance function $$\widetilde{C}(\|\mathbf{x}-\mathbf{y}\|,|t-s|)=\mathbb{E}\left[ Z(\mathbf{x},t)Z(\mathbf{y},s)\right]\geq 0,\quad \forall t,s\in \mathbb{R},\ \mathbf{x}, \mathbf{y}\in \mathbb{R}^{d}.$$\noindent  In spherical coordinates we denote
 \begin{equation}
 C(z,\tau)=\widetilde{C}(\|\mathbf{x}-\mathbf{y}\|,|t-s|),\quad  z=\|\mathbf{x}-\mathbf{y}\|\geq 0,\  \tau=|t-s|\geq 0.
 \label{lrdcof}
 \end{equation}

 For simplicity, we will use $d\mathbf{x}$ instead of $\nu_{d}(d\mathbf{x}),$ and $dt$ instead of $\nu(dt).$ In the spatiotemporal isotropic spherical random field case  sojourn functionals  have been analyzed in Marinucci, Rossi and Vidotto \cite{MarinucciRV}.  We now introduce the following sojourn functional motivated by the first Minkowski functional.  For each time $t$ fixed, the random area
 \begin{eqnarray}
 \mathcal{A}_{u}(t)&=&\left|Z^{-1}(\cdot,t)\left( [u,\infty)\right)\right|=\left|\left\{\mathbf{x}\in \mathcal{K};\ Z(\mathbf{x},t)\geq u \right\}\right|
\nonumber\\ & =&\int_{\mathcal{K}}1_{\mathcal{S}_{Z}(u)}(\mathbf{x},t)d\mathbf{x}\nonumber
 \label{ramf}
 \end{eqnarray}
 \noindent provides the empirical measure (i.e., the excursion area) of $Z(\cdot,t)$ corresponding to the level $u,$ $u\in \mathbb{R}.$  The integrated area over the temporal interval $[0,T]$ is then computed as
  \begin{equation}
 M_{T}^{(1)}(u)=\left|\left\{0\leq t\leq T;\  \mathbf{x}\in \mathcal{K},\  Z(\mathbf{x},t)\geq u\right\}\right|=\int_{0}^{T}\int_{\mathcal{K}}1_{\mathcal{S}_{Z}(u)}
 (\mathbf{x},t)d\mathbf{x}dt,
 \label{mff}
 \end{equation}
 \noindent where $1_{\mathcal{S}_{Z}(u)}(\cdot,\cdot)$ denotes the indicator function of the set  \linebreak $\mathcal{S}_{Z}(u)=\{(\mathbf{y},s)\in \mathcal{K}\times [0,T];\ Z(\mathbf{y},s)\geq u\}.$  Similarly, we can define, for $u\geq 0,$ the random area
  \begin{eqnarray}
 \mathcal{\widetilde{A}}_{u}(t)&=&\left|Z^{-1}(\cdot,t)\left[ (-\infty,-u]\cup [u,\infty)\right]\right|=\left|\left\{\mathbf{x}\in \mathcal{K};\ \left| Z(\mathbf{x},t)\right|\geq u \right\}\right|\nonumber\\
 &=&\int_{\mathcal{K}}1_{\mathcal{S}_{|Z|}(u)}(\mathbf{x},t)d\mathbf{x},\nonumber
 \label{ramf2}
 \end{eqnarray}
\noindent temporally integrated over $[0,T],$  defining the functional
  \begin{equation}
 M_{T}^{(2)}(u)=\left|\left\{0\leq t\leq T;\ \mathbf{x}\in \mathcal{K},\ \left|Z(\mathbf{x},t)\right|\geq u \right\}\right|=\int_{0}^{T}\int_{\mathcal{K}}1_{\mathcal{S}_{|Z|}(u)}
 (\mathbf{x},t)d\mathbf{x}dt.
 \label{mff2}
 \end{equation}

  Let $Z\sim \mathcal{N}(0,1)$ be a standard Gaussian random variable with probability density $\phi ,$ and distribution  function  $\Phi $  given by
  $$\phi (z)=\frac{1}{\sqrt{2\pi}}\exp\left(-\frac{z^{2}}{2}\right),\quad  \Phi (u)=\int_{-\infty }^{u}\phi_{Z}(z)dz,\quad z,u\in \mathbb{R}.$$

  Let now $\mathcal{G}$ be a Borel measurable function such that $$\int_{\mathbb{R}}[\mathcal{G}(z)]^{2}\phi (z)dz<\infty.$$
  Then, $\mathcal{G}$ has an expansion with respect to the normalized Hermite polynomials that converges in $L_{2}(\mathbb{R}, \phi (z)dz ):$

  \begin{equation}\mathcal{G}(z)=\sum_{q=0}^{\infty }\frac{\mathcal{G}_{q}}{q!}H_{q}(z),\quad z\in \mathbb{R},\quad \mathcal{G}_{q}=\int_{\mathbb{R}}H_{q}(\xi)\mathcal{G}(\xi)\phi(\xi)d\xi,\quad  q\geq 1,\label{Hefind}\end{equation}

\noindent where the Hermite polynomial of order $q\geq 1,$  denoted as $H_{q}$ satisfies the equation:
\begin{equation}
\frac{d^{n}\phi}{dz^{n}}(z)=(-1)^{n}H_{n}(z)\phi(z).
\label{hq}
\end{equation}

  Note that  \begin{eqnarray}
H_{0}(x)&=& 1,\ H_{1}(x)=x,\ H_{2}(x)=x^{2}-1\nonumber\\
H_{3}(x)&=& x^{3}-3x,\ H_{4}(x)= x^{4}-6x^{2}+3,\dots \boldsymbol{.}\label{exherpol}
\end{eqnarray}

  Particularly, if $\mathcal{G}_{u}(z)=1_{\{z\geq u\}},$  we then obtain

  \begin{eqnarray}\mathcal{G}_{u}(Z(\mathbf{x},t))&=& \mathbb{E}[1_{\mathcal{S}_{Z}(u)}(\mathbf{x},t)]\nonumber\\
  &+&\sum_{q=1}^{\infty }\frac{\mathcal{G}_{q}(u)}{q!}H_{q}(Z(\mathbf{x},t)),\quad \forall (\mathbf{x},t)\in \mathbb{R}^{d}\times \mathbb{R}.\label{Hefind}\end{eqnarray}

Here,  for every $(\mathbf{x},t)\in \mathbb{R}^{d}\times \mathbb{R},$
\begin{eqnarray}\mathcal{G}_{0}(u) &=&
\mathbb{E}\left[1_{ \mathcal{S}_{Z}(u) }(\mathbf{x},t)\right] = [1-\Phi(u)]=\int_{u }^{\infty}\phi(\xi)d\xi\nonumber\\
\mathcal{G}_{q}(u)&=& \phi(u)H_{q-1}(u),\quad q\geq 1.
\label{hpc}
\end{eqnarray}

  For the second functional corresponding to $\widetilde{\mathcal{G}}_{u}(z)=1_{\{\left|z\right|\geq u\}},$
  we have
  \begin{eqnarray}\widetilde{\mathcal{G}}_{0}(u) &=&
\mathbb{E}[1_{\mathcal{S}_{|Z|}(u)}(\mathbf{x},t)] =2 [1-\Phi(u)]=2\int_{u}^{\infty}\phi(\xi)d\xi\nonumber\\
\widetilde{\mathcal{G}}_{q}(u)&=& 2\phi(u)H_{q-1}(u),
\label{hpc2}
\end{eqnarray}\noindent for any even $q\geq 0,$   and $\widetilde{\mathcal{G}}_{q}(u)=0,$ for odd $q\geq 1.$

In what follows, from (\ref{Hefind})--(\ref{hpc2}), we will consider the induced expansions of the functionals $M_{T}^{(i)}(u),$ $i=1,2,$
given by
\begin{eqnarray}
M_{T}^{(1)}(u)&=& (1-\Phi(u))T|\mathcal{K}|+\phi(u)\sum_{n=1}^{\infty}\frac{H_{n-1}(u)}{n!}\eta_{n}\label{hemf1}\\
M_{T}^{(2)}(u) &=& 2(1-\Phi(u))T|\mathcal{K}|+2\phi(u)\sum_{n=1}^{\infty}\frac{H_{2n-1}(u)}{(2n)!}\eta_{n},
\label{hemf}
\end{eqnarray}
\noindent where
\begin{eqnarray}
\eta_{n}&=&\int_{0}^{T}\int_{\mathcal{K}}H_{n}(Z(\mathbf{x},t))d\mathbf{x}dt,\nonumber\end{eqnarray}
\noindent and
\begin{eqnarray}
\mathbb{E}[\eta_{n}]&=&0,\quad \mathbb{E}[\eta_{n}\eta_{l}]=0,\quad n\neq l,\nonumber\\
\sigma^{2}_{n,\mathcal{K}}(T)&=&\mathbb{E}[\eta_{n}^{2}]=2n!T\int_{0}^{T}\left(1-\frac{\tau}{T}\right)\int_{\mathcal{K}\times  \mathcal{K}}
\widetilde{C}^{n}(\|\mathbf{x}-\mathbf{y}\|,\tau)d\tau  d\mathbf{x}d\mathbf{y}\nonumber\\
&=&2n!T|\mathcal{K}|^{2}\int_{0}^{T}\left(1-\frac{\tau}{T}\right)\mathbb{E}\left[\widetilde{C}^{n}\left(\|P_{1}-P_{2}\|,\tau\right) \right]d\tau
\nonumber\\
&=&2n!T|\mathcal{K}|^{2}\int_{0}^{T}\left(1-\frac{\tau}{T}\right)\int_{0}^{\mathcal{D}(\mathcal{K})}\psi_{\rho_{\mathcal{K}}}(z)C^{n}(z,\tau )dzd\tau.
\label{rvex}
\end{eqnarray}

\medskip

\noindent \noindent \textbf{Condition 2}.  Assume that
\begin{itemize}
\item[(i)]  $\sup_{z\in [0,\mathcal{D}(\mathcal{K})]}|C(z,\tau)|=\sup_{z\in [0,\mathcal{D}(\mathcal{K})]}C(z,\tau)\to 0,$  $\tau\to \infty$
\item[(ii)]  For certain fixed $m\in \{ 1,2,\dots\},$  there exists $\delta \in (0,1)$ such that
\begin{eqnarray}
\lim_{T\to \infty}\frac{1}{T^{\delta }}\int_{0}^{T}\left(1-\frac{\tau}{T}\right)\int_{0}^{\mathcal{D}(\mathcal{K})}C^{m}(z,\tau)\psi_{\rho_{\mathcal{K}}}(z)dzd\tau=\infty.
\nonumber\\
\label{con}
\end{eqnarray}
\end{itemize}

 \subsection{Reduction theorem}
 In this section, we extend the results by  Taqqu  \cite{Taqqu};\cite{Taqqu2} to the case of spatiotemporal random fields with LRD in time. For a function $\mathcal{G}\in L_{2}(\mathbb{R},\phi(u)du),$  under \textbf{Condition 1}, we consider the following local functional
 \begin{eqnarray}
 A_{T}&=&\int_{0}^{T}\int_{\mathcal{K}}\mathcal{G}(Z(\mathbf{x},t))d\mathbf{x}dt\nonumber\\
 &=& T|\mathcal{K}|\mathcal{G}_{0}+\sum_{n=1}^{\infty}\frac{\mathcal{G}_{n}}{n!}\int_{0}^{T}\int_{\mathcal{K}}H_{n}(Z(\mathbf{x},t))d\mathbf{x}dt,
 \label{HESTF}
 \end{eqnarray}
 \noindent where, for $n\geq 0,$  $\mathcal{G}_{n}$  denotes the Fourier coefficient of function $\mathcal{G}$ with respect to $H_{n},$ and the series (\ref{HESTF}) converges in $\mathcal{L}_{2}(\Omega,\mathcal{A},P).$ Denote as in (\ref{rvex}), $\sigma^{2}_{n,\mathcal{K}}(T)=\mathbb{E}[\eta_{n}^{2}],$ hence, we obtain
 \begin{eqnarray}
 \sigma_{T}^{2}=\mbox{Var}(A_{T})=\mathbb{E}\left[A_{T}-E[A_{T}]\right]^{2}=\sum_{n=0}^{\infty }\sigma^{2}_{n,\mathcal{K}}(T).\nonumber
 \end{eqnarray}

 \medskip

 \noindent \textbf{Definition}.   We say that an integer $m\geq 1$  is the Hermite rank of function $\mathcal{G},$  if for $m=1,$
 $\mathcal{G}_{1}\neq 0,$ or for $m\geq 2,$  $\mathcal{G}_{1}=\dots =\mathcal{G}_{m-1}=0,$ $\mathcal{G}_{m}\neq 0$ (see also Taqqu \cite{Taqqu}).

 \begin{theorem}
  \label{th3}
  Under \textbf{Conditions 1} and \textbf{2}, assume that function $\mathcal{G}$ in  (\ref{HESTF}) has Hermite rank $m,$ the random variables
  \begin{eqnarray}
  \label{rth3}
  Y_{T}=\frac{A_{T}-\mathbb{E}[A_{T}]}{|\mathcal{G}_{m}|\sigma_{m,\mathcal{K}}(T)(1/m!)}
  \end{eqnarray}
  \noindent and
  \begin{eqnarray}
  Y_{m,T}=\frac{\mbox{sgn}\{\mathcal{G}_{m}\}\int_{0}^{T}\int_{\mathcal{K}}H_{m}(Z(\mathbf{x},t))d\mathbf{x}dt}{\sigma_{m,\mathcal{K}}(T)}
  \end{eqnarray}
  \noindent have the same limiting distributions (if one of it exists).
 \end{theorem}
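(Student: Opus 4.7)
\medskip

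\noindent\textbf{Proof plan for Theorem \ref{th3}.}
The strategy is the classical Taqqu reduction argument: show that $Y_T - Y_{m,T} \to 0$ in $L^2(\Omega,\mathcal{A},\mathbb{P})$, which immediately implies convergence in probability, and hence equality of limiting distributions via Slutsky. Since
\begin{equation*}
Y_T - Y_{m,T} = \frac{A_T - \mathbb{E}[A_T] - (\mathcal{G}_m/m!)\,\eta_m}{|\mathcal{G}_m|\,\sigma_{m,\mathcal{K}}(T)/m!},
\end{equation*}
and the numerator is, thanks to the Hermite expansion (\ref{HESTF}), the orthogonal $L^2$-sum $\sum_{n\geq m+1}(\mathcal{G}_n/n!)\eta_n$, the $L^2$-norm of $Y_T-Y_{m,T}$ reduces to
\begin{equation*}
\mathbb{E}[(Y_T-Y_{m,T})^2] = \frac{(m!)^2}{\mathcal{G}_m^2\,\sigma_{m,\mathcal{K}}^2(T)}\sum_{n=m+1}^{\infty}\frac{\mathcal{G}_n^2}{(n!)^2}\sigma_{n,\mathcal{K}}^2(T),
\end{equation*}
where orthogonality $\mathbb{E}[H_n(Z(\mathbf{x},t))H_l(Z(\mathbf{y},s))]=\delta_{n,l}n!\,\widetilde{C}^n$ is used. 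The task then becomes to show that this ratio tends to $0$ as $T\to\infty$.

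The key estimate is a one-shot bound on $\sigma_{n,\mathcal{K}}^2(T)$ for $n\geq m+1$. Given $\epsilon>0$, \textbf{Condition 2(i)} provides $\tau_0=\tau_0(\epsilon)$ with $\sup_{z\in[0,\mathcal{D}(\mathcal{K})]}C(z,\tau)\leq \epsilon$ for all $\tau\geq \tau_0$. Splitting the $\tau$-integral at $\tau_0$ in the expression (\ref{rvex}), using $0\leq C\leq 1$ and $\int_0^{\mathcal{D}(\mathcal{K})}\psi_{\rho_{\mathcal{K}}}(z)dz=1$ on $[0,\tau_0]$, and $C^n\leq \epsilon^{n-m}C^m$ on $[\tau_0,T]$, yields
\begin{equation*}
\sigma_{n,\mathcal{K}}^2(T)\leq n!\left[\,2\,|\mathcal{K}|^2\,\tau_0\,T \;+\; \epsilon^{\,n-m}\,\frac{\sigma_{m,\mathcal{K}}^2(T)}{m!}\,\right].
\end{equation*}
Summing against $\mathcal{G}_n^2/(n!)^2$ and using the Parseval identity $\sum_{n\geq 1}\mathcal{G}_n^2/n!\leq \int_{\mathbb{R}}\mathcal{G}^2\phi<\infty$, one obtains
\begin{equation*}
\sum_{n=m+1}^{\infty}\frac{\mathcal{G}_n^2}{(n!)^2}\sigma_{n,\mathcal{K}}^2(T) \;\leq\; 2\,|\mathcal{K}|^2\,\tau_0\,T\,\|\mathcal{G}\|_{\phi}^{2} \;+\; \epsilon\,\|\mathcal{G}\|_{\phi}^{2}\,\frac{\sigma_{m,\mathcal{K}}^2(T)}{m!}.
\end{equation*}

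The final step combines this with \textbf{Condition 2(ii)}, which forces $\sigma_{m,\mathcal{K}}^2(T)/T\to\infty$ (in fact $\sigma_{m,\mathcal{K}}^2(T)\gtrsim T^{1+\delta}$). Dividing the previous display by $\mathcal{G}_m^2\,\sigma_{m,\mathcal{K}}^2(T)/(m!)^2$ gives
\begin{equation*}
\mathbb{E}[(Y_T-Y_{m,T})^2] \;\leq\; \frac{2\,|\mathcal{K}|^2\,\tau_0\,(m!)^2\,\|\mathcal{G}\|_{\phi}^{2}\,T}{\mathcal{G}_m^2\,\sigma_{m,\mathcal{K}}^2(T)} \;+\; \frac{\epsilon\,m!\,\|\mathcal{G}\|_{\phi}^{2}}{\mathcal{G}_m^2}.
\end{equation*}
The first summand vanishes by \textbf{Condition 2(ii)}, while the second is $\epsilon$ times a constant independent of $T$. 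Letting $T\to\infty$ and then $\epsilon\to 0$ produces $Y_T-Y_{m,T}\to 0$ in $L^2$, which completes the argument.

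The step I expect to be the main obstacle is the uniform-in-$n$ control of the high-order summands: one must exploit the geometric factor $\epsilon^{n-m}$ coming from \textbf{Condition 2(i)} while keeping the $T$-dependent part proportional to $\sigma_{m,\mathcal{K}}^2(T)$ rather than larger quantities, so that \textbf{Condition 2(ii)} is strong enough to kill the residual $O(T)$ contribution. Once this uniform bound is in place the rest is bookkeeping via Parseval and Slutsky; no regularly varying structure on $C(z,\tau)$ is needed, which is precisely the point of the generalization.
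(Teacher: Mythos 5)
Your proposal is correct and follows essentially the same reduction argument as the paper: expand in Hermite polynomials, isolate the rank-$m$ term, bound $\mathrm{Var}(S_{2,T})=\sum_{n\geq m+1}\mathcal{G}_n^2\sigma_{n,\mathcal{K}}^2(T)/(n!)^2$ by splitting the time integral and using \textbf{Condition 2(i)} on the far range together with $0\leq C\leq 1$ and Parseval near the origin, then divide by $\sigma_{m,\mathcal{K}}^2(T)$. The only substantive difference is that you truncate at a fixed $\tau_0(\epsilon)$ and send $\epsilon\to 0$ at the end, whereas the paper truncates at the $T$-dependent point $T^{\delta}$ and absorbs the resulting $O(T^{\delta+1})$ term using the full strength of \textbf{Condition 2(ii)}; your variant needs only $\sigma_{m,\mathcal{K}}^2(T)/T\to\infty$, which is implied by the stated hypotheses, so both routes are valid.
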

 \begin{proof}
 We split
 $$A_{T}-\mathbb{E}[A_{T}]=S_{1,T}+S_{2,T},$$
 \noindent where using notation (\ref{rvex}), and applying Parseval identity,
 \begin{eqnarray}
 S_{1,T}=\frac{\mathcal{G}_{m}}{m!}\xi_{m},\quad  S_{2,T}=\sum_{n=m+1}^{\infty}\frac{\mathcal{G}_{n}}{n!}\xi_{n},\quad \sum_{n=m}^{\infty }
 \frac{\mathcal{G}^{2}_{n}}{n!}<\infty\quad \mbox{a.s}.
 \label{hefm}
 \end{eqnarray}
 From (\ref{rvex}) and (\ref{hefm}) we get
 \begin{equation}
 \mbox{Var}\left( A_{T}\right)=\mbox{Var}(S_{1,T})+\mbox{Var}(S_{2,T}),
 \label{varfunc}
 \end{equation}
 \noindent and we have to show that $$\frac{\mbox{Var}(S_{2,T})}{\sigma^{2}_{m,\mathcal{K}}(T)}\to  0,\quad T\to \infty.$$

 Under \textbf{Condition 2}(i),
 \begin{equation}
 \sup_{z\in [0,\mathcal{D}(\mathcal{K})], \tau\geq T^{\delta}} C(z,\tau )\to 0,\quad T\to \infty,
 \label{cobzer}
 \end{equation}
 \noindent where $\delta $ satisfies \textbf{Condition} 2(ii). Note that,  for $0\leq \tau\leq T^{\delta},$ the unit variance of $Z$ allows to work with the uniform estimate
 $|C(z,\tau)|^{m+1}\leq 1,$ $z\in \mathbb{R}_{+}.$  From (\ref{hefm}), we then have
 \begin{eqnarray}
 \mbox{Var}(S_{2,T})&\leq &\sum_{n=m+1}^{\infty}\frac{\mathcal{G}_{n}^{2}}{(n!)^{2}}\sigma_{n,\mathcal{K}}^{2}(T)\nonumber\\
 &\leq & M_{1}\left\{2T\left[\int_{0}^{T^{\delta }}+\int_{T^{\delta }}^{T}\right]\right\}\left(1-\frac{\tau}{T}\right)\int_{0}^{\mathcal{D}(\mathcal{K})}C^{m+1}(z,\tau)
 \psi_{\rho_{\mathcal{K}}}(z)dzd\tau,\nonumber\\
 \label{bub}
 \end{eqnarray}
 \noindent for $M_{1}>0, $ whose value follows from (\ref{rvex}). In addition, from (\ref{fak20}),
 \begin{equation}
 \psi_{\rho_{\mathcal{K}}}(z)\leq \frac{z^{d-1}}{|\mathcal{K}|}|S_{d-1}|,\quad 0\leq z\leq \mathcal{D}(\mathcal{K}),\label{ine31}
 \end{equation}
 \noindent leading to
 \begin{eqnarray}
 \mbox{Var}(S_{2,T})&\leq & M_{1}\left\{M_{2}T^{\delta +1}+2T\int_{T^{\delta }}^{T}\left(1-\frac{\tau}{T}\right)\int_{\mathcal{K}}C^{m+1}(z,\tau)
 \psi_{\rho_{\mathcal{K}}}(z)dzd\tau\right\}\nonumber\\
 &\leq &M_{3}\left\{T^{\delta +1}+2T\sup_{z\in [0,\mathcal{D}(\mathcal{K})], \tau \geq T^{\delta }}C(z,\tau)\right.\nonumber\\
 &&\hspace*{2cm}\left.\times \int_{T^{\delta }}^{T}\left(1-\frac{\tau}{T}\right)\int_{\mathcal{K}}C^{m}(z,\tau)
 \psi_{\rho_{\mathcal{K}}}(z)dzd\tau\right\}.
 \label{eth3in2}
 \end{eqnarray}
 Hence,
 \begin{eqnarray}
&&\hspace*{-2cm}\frac{ \mbox{Var}(S_{2,T})}{\sigma_{m,\mathcal{K}}^{2}(T)}\leq M_{4}\left\{\frac{1}{T^{-(\delta +1)}\sigma_{m,\mathcal{K}}^{2}(T)}\right.\nonumber\\ &&\hspace*{0.5cm}\left.+M_{5}\sup_{z\in [0,\mathcal{D}(\mathcal{K})], \tau\geq T^{\delta}}C(z,\tau)\frac{\int_{T^{\delta }}^{T}\left(1-\frac{\tau}{T}\right)\int_{\mathcal{K}}C^{m}(z,\tau)
 \psi_{\rho_{\mathcal{K}}}(z)dzd\tau}{\int_{0}^{T}\left(1-\frac{\tau}{T}\right)\int_{\mathcal{K}}C^{m}(z,\tau)
 \psi_{\rho_{\mathcal{K}}}(z)dzd\tau}\right\}.\nonumber\\
  \label{eth3in2b}
 \end{eqnarray}
 From  (\ref{rvex}), under \textbf{Condition 2}(ii),
 \begin{eqnarray}
 \frac{\sigma_{m,\mathcal{K}}^{2}(T)}{T^{\delta +1}}\to \infty,\quad T\to \infty,
  \label{sc2}
 \end{eqnarray}
 \noindent and under \textbf{Condition 2(i)},
 $$\sup_{z\in  [0,\mathcal{D}(\mathcal{K})], \tau\geq T^{\delta}}C(z,\tau)\to 0,\quad T\to \infty.$$
 Note that, \begin{equation} \frac{\int_{T^{\delta }}^{T}\left(1-\frac{\tau}{T}\right)\int_{\mathcal{K}}C^{m}(z,\tau)
 \psi_{\rho_{\mathcal{K}}}(z)dzd\tau}{\int_{0}^{T}\left(1-\frac{\tau}{T}\right)\int_{\mathcal{K}}C^{m}(z,\tau)
 \psi_{\rho_{\mathcal{K}}}(z)dzd\tau}\leq 1.  \label{c1cov}
 \end{equation}

 The  convergence to zero of  $\frac{ \mbox{\small Var}(S_{2,T})}{\sigma_{m,\mathcal{K}}^{2}(T)}$  then follows from equation (\ref{eth3in2b}) under \textbf{Condition 2}, leading to  $\mathbb{E}[Y_{T}-Y_{m,T}]^{2}=\frac{ \mbox{\small Var}(S_{2,T})}{\sigma_{m,\mathcal{K}}^{2}(T)}\to 0,$  $T\to \infty,$ as we wanted to prove.
 \end{proof}

 \begin{remark}
 \label{rem1}
 We have applied  in (\ref{c1cov}) that, under \textbf{Condition 1}, the correlation function $C(z,\tau)\geq 0,$ for every $\tau ,z\in \mathbb{R}_{+}.$

 \end{remark}

\begin{remark}

For a short memory case, one can assume that for a fixed $m\geq 1,$
$$\int_{0}^{\infty}\int_{0}^{\mathcal{D}(\mathcal{K})} \psi_{\rho_{\mathcal{K}}}(\mathbf{z})\left|C(\mathbf{z},\tau)\right|^{m}d\mathbf{z}d\tau <\infty.$$
Then, once can show, using standard arguments that,  as $T\to \infty,$ the asymptotic variance satisfies
\begin{equation}
\sigma_{T}^{2}=\mbox{Var}(A_{T})=\sum_{n=m}^{\infty }\frac{\mathcal{G}_{n}^{2}}{(n!)^{2}}\sigma^{2}_{n,\mathcal{K}}(T)=TB(1+o(1)),
\label{er2}
\end{equation}

\noindent where $$B=\sum_{n=m}^{\infty}\frac{\mathcal{G}_{n}^{2}}{(n!)^{2}}\lim_{T\to \infty}\frac{\sigma^{2}_{n,\mathcal{K}}(T)}{T}<\infty.$$

Then, using the method of moments and diagram formulae (see for details Theorem 2.3.1 in Ivanov and Leonenko \cite{Ivanov89}), one can prove under condition
(\ref{er2}) that   $(A_{T}-E[A_{T}])/\sqrt{T}$ converges to a normal distribution with zero mean and variance $B.$
 \end{remark}

\section{Sojourn functionals}
\label{sfunct}

As an application of reduction  Theorem  \ref{th3}, the following result proves  the convergence to a standard normal distribution for the case of Hermite rank $m=1.$

\begin{theorem}
\label{th1}
Under \textbf{Conditions 1} and \textbf{2}, for $m=1,$ the random variables
\begin{eqnarray}
&&X_{1,T}=\frac{M_{T}^{(1)}(u)-T|\mathcal{K}|(1-\Phi(u))}{\phi(u)\left[2T\int_{0}^{T}\left(1-\frac{\tau }{T}\right)\int_{0}^{\mathcal{D}(\mathcal{K})} C(z,\tau)\psi_{\rho_{\mathcal{K}}}(z)dzd\tau\right]^{1/2}},\nonumber\\
\label{rvthr}\end{eqnarray}
\noindent and
\begin{eqnarray}
\frac{\int_{0}^{T}\int_{\mathcal{K}}Z(x,t)dxdt}{\left[2T\int_{0}^{T}\left(1-\frac{\tau }{T}\right)\int_{0}^{\mathcal{D}(\mathcal{K})}C(z,\tau)\psi_{\rho_{\mathcal{K}}}(z)dzd\tau\right]^{1/2}}
\label{rvthr2}
\end{eqnarray}
\noindent have the same limit as $T\to \infty.$  Namely,  the convergence to  a standard normal distribution holds.

\end{theorem}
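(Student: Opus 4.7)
The plan is to recognize Theorem \ref{th1} as the immediate payoff of Theorem \ref{th3} for the particular non--linear transformation $\mathcal{G}_u(z)=1_{\{z\geq u\}}$ with Hermite rank exactly $m=1$, and to close the argument by observing that the leading Hermite chaos is linear and hence already exactly Gaussian.

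First I would identify the objects. Write $A_T=M_T^{(1)}(u)=\int_0^T\int_\mathcal{K}\mathcal{G}_u(Z(\mathbf{x},t))d\mathbf{x}dt$. From (\ref{hpc}) the Hermite coefficients are $\mathcal{G}_0(u)=1-\Phi(u)$ and, for $q\geq1$, $\mathcal{G}_q(u)=\phi(u)H_{q-1}(u)$, so $\mathcal{G}_1(u)=\phi(u)H_0(u)=\phi(u)\neq 0$ for every $u\in\mathbb{R}$. Hence the Hermite rank of $\mathcal{G}_u$ equals $m=1$, and \textbf{Condition 2} with $m=1$ is exactly the hypothesis of the theorem.

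Next, apply Theorem \ref{th3} with $m=1$. Using (\ref{rvex}) to identify $\sigma^{2}_{1,\mathcal{K}}(T)$ with the quantity under the square root in the normalizations of (\ref{rvthr}) and (\ref{rvthr2}) (up to the constant factor $|\mathcal{K}|$ produced by the double spatial integral), the random variables $Y_T$ and $Y_{1,T}$ in (\ref{rth3}) specialize, using $\mathrm{sgn}\{\mathcal{G}_1(u)\}=1$, to
\begin{equation*}
Y_T=\frac{M_T^{(1)}(u)-T|\mathcal{K}|(1-\Phi(u))}{\phi(u)\sigma_{1,\mathcal{K}}(T)},\qquad Y_{1,T}=\frac{\int_0^T\int_\mathcal{K}Z(\mathbf{x},t)d\mathbf{x}dt}{\sigma_{1,\mathcal{K}}(T)},
\end{equation*}
that is, precisely the random variables displayed in (\ref{rvthr}) and (\ref{rvthr2}). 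Theorem \ref{th3} then yields $\mathbb{E}(Y_T-Y_{1,T})^{2}\to 0$ as $T\to\infty$, so $X_{1,T}$ and (\ref{rvthr2}) share the same limiting law provided such a limit exists.

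Finally, I would identify the limit. The key simplification for Hermite rank one is that $H_1(z)=z$, so $\eta_1=\int_0^T\int_\mathcal{K}Z(\mathbf{x},t)d\mathbf{x}dt$ is a continuous linear functional of the centered Gaussian field $Z$ and is therefore, for every $T$, \emph{exactly} centered Gaussian with variance $\sigma^{2}_{1,\mathcal{K}}(T)$. Consequently $Y_{1,T}\sim\mathcal{N}(0,1)$ for every $T$, not merely asymptotically, and passing to the limit via the reduction gives $X_{1,T}\xrightarrow{\mathcal{D}}\mathcal{N}(0,1)$. The only point requiring any genuine work in the argument is the verification that the reduction (Theorem \ref{th3}) applies, and this verification has already been done once and for all; there is no nontrivial CLT to establish, since Gaussianity is inherited directly from the underlying field. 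The contrast with the Hermite rank $2$ case treated later (where a Rosenblatt--type limit appears and the chaos argument is substantive) is precisely what makes this step clean.
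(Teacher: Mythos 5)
Your proposal is correct and follows exactly the route the paper intends: Theorem \ref{th1} is the specialization of the reduction principle (Theorem \ref{th3}) to $\mathcal{G}_u(z)=1_{\{z\geq u\}}$, whose first Hermite coefficient $\mathcal{G}_1(u)=\phi(u)$ never vanishes, combined with the observation that $\eta_1=\int_0^T\int_{\mathcal{K}}Z(\mathbf{x},t)\,d\mathbf{x}\,dt$ is exactly Gaussian, so the limit exists and is normal. Your parenthetical about the factor $|\mathcal{K}|$ is well taken — comparing with $\sigma^2_{1,\mathcal{K}}(T)$ in (\ref{rvex}) and with the normalization in Theorem \ref{th2}, the denominator in (\ref{rvthr})--(\ref{rvthr2}) appears to be missing a factor $|\mathcal{K}|$, but this affects only the variance constant of the limit, not the validity of the reduction argument.
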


 The analogous result to Theorem \ref{th1} for functional $M_{T}^{(2)}$ is now formulated.
 \begin{theorem}
 \label{th2}
 Under \textbf{Conditions 1} and \textbf{2}, with $m=2,$  the random variables
  \begin{eqnarray}
 &&X_{2,T}=\frac{M_{T}^{(2)}(u)-2T|\mathcal{K}|(1-\Phi(u))}{[\phi(u)]^{2}\left[T|\mathcal{K}|^{2}\int_{0}^{T}\left(1-\frac{\tau }{T}\right)\int_{0}^{\mathcal{D}(\mathcal{K})} C^{2}(z,\tau)\psi_{\rho_{\mathcal{K}}}(z)dzd\tau\right]^{1/2}},\nonumber\\
 \label{rvth2}
 \end{eqnarray}
 \noindent and
 \begin{eqnarray}
&&Y_{2,T}=\frac{\int_{0}^{T}\int_{\mathcal{K}}(Z^{2}(x,t)-1)dxdt}{2\left[T|\mathcal{K}|^{2}\int_{0}^{T}\left(1-\frac{\tau }{T}\right)\int_{0}^{\mathcal{D}(\mathcal{K})}C^{2}(z,\tau)\psi_{\rho_{\mathcal{K}}}(z)dzd\tau\right]^{1/2}}
\label{rvthr2b}
\end{eqnarray}
 \noindent have the same limit distribution in the sense that if one exists then so does the other and the two are equal.
 \end{theorem}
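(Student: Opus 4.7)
The plan is to apply the reduction principle of Theorem \ref{th3} with Hermite rank $m=2$ to the function $\widetilde{\mathcal{G}}_u(z)=\mathbf{1}_{\{|z|\geq u\}}$, which exhibits $M_T^{(2)}(u)$ as the local functional $A_T$ in (\ref{HESTF}). The first step is to verify that $\widetilde{\mathcal{G}}_u$ has Hermite rank exactly $2$ for $u>0$: by (\ref{hpc2}), all odd--order coefficients vanish by symmetry, while $\widetilde{\mathcal{G}}_2(u)=2\phi(u)H_1(u)=2u\phi(u)\neq 0$, so the expansion (\ref{hemf}) of $M_T^{(2)}(u)$ is dominated by the $H_2$--term $\int_0^T\int_{\mathcal{K}}(Z^2(\mathbf{x},t)-1)\,d\mathbf{x}\,dt$.

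The second step is to substitute the explicit constants into the two variables of Theorem \ref{th3}. For $m=2$ one has $|\mathcal{G}_m|/m!=|\widetilde{\mathcal{G}}_2(u)|/2=|u|\phi(u)$, and by (\ref{rvex}),
\begin{equation*}
\sigma_{2,\mathcal{K}}(T)=2\left[T|\mathcal{K}|^{2}\int_{0}^{T}\left(1-\frac{\tau}{T}\right)\int_{0}^{\mathcal{D}(\mathcal{K})} C^{2}(z,\tau)\psi_{\rho_{\mathcal{K}}}(z)\,dz\,d\tau\right]^{1/2}.
\end{equation*}
Substituting these into (\ref{rth3}) produces, up to a deterministic positive constant, the variable $X_{2,T}$, while the variable $Y_{m,T}$ of Theorem \ref{th3} becomes (up to a sign, from $\mathrm{sgn}(\widetilde{\mathcal{G}}_2(u))=\mathrm{sgn}(u)$) the random variable $Y_{2,T}$, since $H_2(Z)=Z^2-1$ and the denominators of $Y_{2,T}$ and $Y_{m,T}$ coincide. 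The equivalence of limit laws for $X_{2,T}$ and $Y_{2,T}$ is then an immediate consequence of Theorem \ref{th3}.

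The remaining verifications are routine: $\widetilde{\mathcal{G}}_u\in L^{2}(\mathbb{R},\phi(z)dz)$ since indicators are bounded, and Conditions 1 and 2 are hypotheses. The main technical content--showing that the variance contribution of Hermite components of order $\geq 4$ is negligible compared with $\sigma_{2,\mathcal{K}}^2(T)$--has already been discharged within the proof of Theorem \ref{th3}, so no substantive new obstacle arises. The entire argument for Theorem \ref{th2} reduces to bookkeeping at the level of identifying the leading Hermite coefficient $\widetilde{\mathcal{G}}_2(u)=2u\phi(u)$ and the corresponding normalization; the only subtle point is that Hermite rank equal to $2$ requires $u\neq 0$, which is implicitly assumed since the normalization would otherwise degenerate.
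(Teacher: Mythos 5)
Your proof is exactly the paper's: the paper proves Theorem \ref{th2} in one line by invoking Theorem \ref{th3}, and you have supplied precisely the bookkeeping (Hermite rank $2$ of $\widetilde{\mathcal{G}}_u$ for $u\neq 0$, the coefficient $\widetilde{\mathcal{G}}_2(u)=2u\phi(u)$, and $\sigma_{2,\mathcal{K}}(T)$ read off from (\ref{rvex})) that this invocation needs. The only substantive point is the one your phrase ``up to a deterministic positive constant'' quietly absorbs: Theorem \ref{th3} produces the normalization $|u|\phi(u)\,\sigma_{2,\mathcal{K}}(T)$ for the centred $M_{T}^{(2)}(u)$, whereas the denominator printed in $X_{2,T}$ equals $\tfrac{1}{2}[\phi(u)]^{2}\sigma_{2,\mathcal{K}}(T)$, so the two differ by the factor $2|u|/\phi(u)$ and the variables in the statement share a limit law only after correcting what appears to be a typo in the printed constant (a similar slip occurs in Theorem \ref{th1}, where a factor $|\mathcal{K}|$ is dropped) --- your derivation, not the printed normalization, is the internally consistent one.
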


 The proof of Theorem \ref{th2} is obtained from Theorem  \ref{th3}.

\section{Examples}
\label{ex}
In Sections \ref{scfb} and \ref{nscf}, we  will present  some examples of covariance functions displaying long--range dependence in time for which Conditions 2(i)--(ii) hold true.
 \subsection{Separable covariance structures}
\label{scfb}
 Under \textbf{Condition 1}, the covariance function $\widetilde{C}(\|\mathbf{x}-\mathbf{y}\|,|t-s|)=C(z,\tau)$ is said to be separable if it can
 be factorized as the product of a spatial  $C_{\mathcal{S}}$ and  temporal $C_{\mathcal{T}}$ covariance functions (see Cressie and Huang \cite{Cressie}, and Christakos  \cite{Christakos00}). That is,
 \begin{equation}\widetilde{C}(\|\mathbf{x}-\mathbf{y}\|,|t-s|)=C(z,\tau)= C_{\mathcal{S}}(z)C_{\mathcal{T}}(\tau ),\label{scf}
 \end{equation}
 \noindent where, as before, $z\geq 0,$ and $\tau\geq 0.$

 \medskip

 \noindent \textbf{Condition 4}.  Consider the covariance function
 \begin{equation}
 C_{\mathcal{T}}(\tau )= \frac{\mathcal{L}(\tau )}{\tau^{\alpha }},\quad \tau \geq 0,\quad \alpha \in (0,1),
 \label{eqscm1}
 \end{equation}
 \noindent where $\mathcal{L}$ is a slowly varying function at infinity  locally bounded, i.e., bounded at each bounded interval.

 Under \textbf{Conditions 1} and \textbf{4}, for $\alpha \in \left(0,\frac{1}{n}\right),$ for separable covariance functions as given in (\ref{scf}), we obtain
 \begin{eqnarray}
 \sigma_{n}^{2}(T)&=&2n! T\int_{0}^{T}\left(1-\frac{\tau }{T}\right)C^{n}_{\mathcal{T}}(\tau )d\tau\nonumber\\
&=& T^{2-n\alpha }\mathcal{L}^{n}(T)\left[2n!\int_{0}^{1}(1-\tau)\tau^{-n\alpha }d\tau\right] \left( 1+o(1)\right).
 \label{varscf}
 \end{eqnarray}
 \noindent    From (\ref{rvex}) and (\ref{varscf}), as $T\to \infty,$
 \begin{eqnarray}
 \sigma_{n,\mathcal{K}}^{2}(T)&=&c_{\mathcal{K}}(n,\alpha )T^{2-n\alpha }\mathcal{L}^{n}(T)\left( 1+o(1)\right),\nonumber\end{eqnarray}
 \noindent where $$c_{\mathcal{K}}(n,\alpha )= 2n!\left[\int_{0}^{1}\left(1-\tau \right)\frac{d\tau }{\tau^{\alpha n}}\right]|\mathcal{K}|^{2}\int_{0}^{\mathcal{D}(\mathcal{K})} C_{\mathcal{S}}(z)\psi_{\rho_{\mathcal{K}}}(z)dz.$$

 \begin{proposition}
 \label{p41}
 Under \textbf{Conditions 1} and \textbf{4}, for separable covariance functions  (\ref{scf}), \textbf{Condition 2(ii)} holds for $\alpha \in (0,1),$
 if $m=1,$ and for  $\alpha \in (0,1/2)$ if $m=2.$  Moreover, for $\alpha \in (0,1/2),$ the random variables (\ref {rvth2}) and (\ref{rvthr2b}) have,
  as $T\rightarrow \infty ,$   the limiting  distribution  $\mathcal{R}$ of Rosenblatt type, given
 by the following Wiener-It$\widehat{\mbox{o}}$ integral representation, with respect to spatiotemporal complex Gaussian white noise random measure $W$
on $\mathbb{R}^{2}\times \mathbb{R}^{2d}$ (integration over hyperdiagonals are excluded, see, e.g., Dobrushin and Major \cite{Dobrushin})
\begin{eqnarray}
\mathcal{R}&=&\frac{c_{T}(\alpha )}{\sqrt{c_{\mathcal{K}}(2,\alpha )}}\int_{\mathbb{R}^{2}}^{\prime }\frac{\exp \{i(\mu _{1}+\mu _{2})\}-1}{i(\mu _{1}+\mu _{2})}\frac{1}{\left\vert
\mu _{1}\mu _{2}\right\vert ^{\frac{1-\alpha }{2}}}\nonumber\\
&\times & \int_{\mathbb{R}^{2d}}^{\prime }\left[\int_{\mathcal{K}}\exp \left\{i\left\langle \mathbf{x},\boldsymbol{\omega}_{1}+\boldsymbol{\omega}_{2}\right\rangle\right\}d\mathbf{x}\right]\left[\prod_{j=1}^{2}
f_{S}(\boldsymbol{\omega} _{j})\right]^{1/2}\nonumber\\
&&\hspace*{3cm}\times W(d\mu
_{1},d\boldsymbol{\omega} _{1})W(d\mu _{2},d\boldsymbol{\omega} _{2}),
\label{srncl}
\end{eqnarray}
\noindent where
$$f_{\mathcal{S}}(\boldsymbol{\omega })=\frac{1}{[2\pi]^{d}}\int_{\mathbb{R}^{d}}\exp\left(-i\left\langle \boldsymbol{\omega },\mathbf{x}\right\rangle\right)  \widetilde{C}_{\mathcal{S}}(\|\mathbf{x}\|) d\mathbf{x},$$
 \noindent and
\begin{equation}c_{T}(\alpha )=\frac{\Gamma \left(\frac{1-\alpha }{2}\right)}{2^{\alpha }\Gamma \left(\frac{\alpha }{2}\right)\sqrt{\pi}}\label{TTC}
 \end{equation}
\noindent is the  Tauberian constant.

 \end{proposition}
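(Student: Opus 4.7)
The plan is to split the proof into two essentially independent parts: (i) verification of \textbf{Condition 2} in the separable setting, which reduces to bookkeeping on the formulas already derived in (\ref{varscf}); and (ii) identification of the Rosenblatt-type limit, which is the main technical step and proceeds through the isonormal/spectral representation of $Z$, a temporal rescaling $\mu\mapsto \lambda/T$, and the Tauberian relation satisfied by the spectral density corresponding to \textbf{Condition 4}.

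For part (i), the supremum in \textbf{Condition 2(i)} factorises as $C_{\mathcal{T}}(\tau)\sup_{z\in[0,\mathcal{D}(\mathcal{K})]}C_{\mathcal{S}}(z)$, with $C_{\mathcal{T}}(\tau)=\mathcal{L}(\tau)\tau^{-\alpha}\to 0$ since $\alpha>0$, so (i) holds trivially. For \textbf{Condition 2(ii)}, the displayed formula $\sigma_{n,\mathcal{K}}^{2}(T)=c_{\mathcal{K}}(n,\alpha)T^{2-n\alpha}\mathcal{L}^{n}(T)(1+o(1))$ together with (\ref{rvex}) shows that the integral appearing in (\ref{con}) is of exact order $T^{1-m\alpha}\mathcal{L}^{m}(T)$. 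Hence one can choose $\delta\in(0,1-m\alpha)$ (assuming $\mathcal{L}$ is bounded away from zero on $(T_{0},\infty)$, which is allowed since $\mathcal{L}$ is locally bounded and positive) as soon as $1-m\alpha>0$. This gives exactly $\alpha\in(0,1)$ for $m=1$ and $\alpha\in(0,1/2)$ for $m=2$, activating Theorem \ref{th3} and hence Theorem \ref{th2}.

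For part (ii), by Theorem \ref{th2} it suffices to identify the weak limit of
\begin{equation*}
Y_{2,T}=\frac{1}{\sigma_{2,\mathcal{K}}(T)}\int_{0}^{T}\int_{\mathcal{K}}H_{2}(Z(\mathbf{x},t))\,d\mathbf{x}\,dt.
\end{equation*}
I would write $Z(\mathbf{x},t)=\int_{\mathbb{R}\times\mathbb{R}^{d}} e^{i(\mu t+\langle\mathbf{x},\boldsymbol{\omega}\rangle)}[f_{\mathcal{T}}(\mu)f_{\mathcal{S}}(\boldsymbol{\omega})]^{1/2}W(d\mu,d\boldsymbol{\omega})$, available thanks to the separable structure, and apply the diagram/It\^{o} formula $H_{2}(Z(\mathbf{x},t))=I_{2}(\cdot)$ to get $\int_{0}^{T}\int_{\mathcal{K}}H_{2}(Z)\,d\mathbf{x}\,dt = I_{2}(K_{T})$, where
\begin{equation*}
K_{T}(\mu_{1},\boldsymbol{\omega}_{1},\mu_{2},\boldsymbol{\omega}_{2})=\frac{e^{i(\mu_{1}+\mu_{2})T}-1}{i(\mu_{1}+\mu_{2})}\Bigl[\int_{\mathcal{K}}e^{i\langle\mathbf{x},\boldsymbol{\omega}_{1}+\boldsymbol{\omega}_{2}\rangle}d\mathbf{x}\Bigr]\prod_{j=1}^{2}[f_{\mathcal{T}}(\mu_{j})f_{\mathcal{S}}(\boldsymbol{\omega}_{j})]^{1/2}.
\end{equation*}
The key rescaling is $\mu_{j}=\lambda_{j}/T$, which turns the oscillatory ratio into $T\,[e^{i(\lambda_{1}+\lambda_{2})}-1]/[i(\lambda_{1}+\lambda_{2})]$ and, using the self-similarity of complex Gaussian white noise, supplies a factor $T^{-1}$ from the two $W$-differentials. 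Invoking the Tauberian theorem (which yields $f_{\mathcal{T}}(\mu)\sim c_{T}(\alpha)\mathcal{L}(1/|\mu|)|\mu|^{\alpha-1}$ as $\mu\to 0$, with $c_{T}(\alpha)$ as in (\ref{TTC})), one gets $[f_{\mathcal{T}}(\lambda_{j}/T)]^{1/2}\sim [c_{T}(\alpha)\mathcal{L}(T)]^{1/2}T^{(1-\alpha)/2}|\lambda_{j}|^{(\alpha-1)/2}$. Multiplying the surviving factors gives the prefactor $c_{T}(\alpha)\mathcal{L}(T)T^{1-\alpha}$, which exactly cancels against $\sigma_{2,\mathcal{K}}(T)\sim\sqrt{c_{\mathcal{K}}(2,\alpha)}\,T^{1-\alpha}\mathcal{L}(T)$ up to the constant $c_{T}(\alpha)/\sqrt{c_{\mathcal{K}}(2,\alpha)}$ appearing in (\ref{srncl}).

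The main obstacle is justifying that this formal limit is an honest $L^{2}(\Omega)$-limit of the multiple Wiener--It\^{o} integrals, namely $\sigma_{2,\mathcal{K}}(T)^{-2}\|K_{T}(\cdot/T,\cdot)\|_{L^{2}}^{2}\to \|K_{\infty}\|_{L^{2}}^{2}$ where $K_{\infty}$ is the kernel in (\ref{srncl}). This requires (a) a uniform integrable majorant for the rescaled temporal density $f_{\mathcal{T}}(\lambda_{j}/T)/[c_{T}(\alpha)\mathcal{L}(T)T^{1-\alpha}|\lambda_{j}|^{\alpha-1}]$, handled by Potter's bounds on the slowly varying $\mathcal{L}$ together with integrability near $0$ (using $\alpha<1/2$ to make $|\lambda|^{\alpha-1}$ square-integrable against $|(e^{i\lambda}-1)/\lambda|^{2}$) and at infinity (using the sinc-squared decay); and (b) square-integrability in $\boldsymbol{\omega}$ of the spatial factor $|\int_{\mathcal{K}}e^{i\langle\mathbf{x},\boldsymbol{\omega}_{1}+\boldsymbol{\omega}_{2}\rangle}d\mathbf{x}|^{2}f_{\mathcal{S}}(\boldsymbol{\omega}_{1})f_{\mathcal{S}}(\boldsymbol{\omega}_{2})$, which follows from boundedness of $\mathcal{K}$ (giving a bounded Fourier transform factor) and $f_{\mathcal{S}}\in L^{1}(\mathbb{R}^{d})$ (equivalent to $C_{\mathcal{S}}(0)=1<\infty$). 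Once these bounds are in place, dominated convergence upgrades the pointwise convergence of the rescaled kernels to $L^{2}$-convergence, which is equivalent to convergence in distribution of the second-chaos random variables, concluding the proof.
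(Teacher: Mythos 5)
Your proposal follows essentially the same route as the paper's own (sketched) proof: the separable spectral density, the isonormal representation of $Z$, the self--similarity of the white noise under the rescaling $\mu\mapsto\mu/T$, the It\^o formula for $H_{2}$, the Tauberian asymptotics (\ref{TT}) giving the constant $c_{T}(\alpha)$, and finally $L^{2}$-convergence of the rescaled second-chaos kernels via dominated convergence. Your additional bookkeeping for \textbf{Condition 2(ii)} (reading the order $T^{1-m\alpha}\mathcal{L}^{m}(T)$ off (\ref{varscf}) and choosing $\delta<1-m\alpha$) and the explicit Potter-bound domination are correct fillings-in of steps the paper leaves implicit.
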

 \begin{remark}
 Note that
$E\left[\mathcal{R}^{2}\right]<\infty .
$

 \end{remark}
 \begin{proof}
 The proof of Proposition \ref{p41} is standard (see, e.g., Leonenko and Olenko \cite{LeonenkoOlenko14}). A sketch of the proof is now given.
 Note that the spectral density of a spatiotemporal random field with separable covariance function  (\ref{scf}) is also separable, i.e.,
 \begin{eqnarray}
 f(\boldsymbol{\omega },\mu)&=& \frac{1}{(2\pi)^{d+1}}\int_{\mathbb{R}\times \mathbb{R}^{d}}\exp\left(-i\mu\tau\right)
 \exp\left(-i\left\langle \boldsymbol{\omega },\mathbf{x}\right\rangle\right)\widetilde{C}_{\mathcal{S}}(\|\mathbf{x}\|)\widetilde{C}_{\mathcal{T}}(|\tau|)d\mathbf{x}d\tau\nonumber\\
 &=&\left[\frac{1}{[2\pi]^{d}}\int_{\mathbb{R}^{d}}\exp\left(-i\left\langle \boldsymbol{\omega },\mathbf{x}\right\rangle\right)  \widetilde{C}_{\mathcal{S}}(\|\mathbf{x}\|) d\mathbf{x}\right]\left[\frac{1}{2\pi}\int_{\mathbb{R}}\exp\left(-i\mu\tau\right)\widetilde{C}_{\mathcal{T}}(|\tau|)d\tau\right]\nonumber\\
 &=& f_{\mathcal{S}}(\boldsymbol{\omega })f_{\mathcal{T}}(\mu ),\quad \boldsymbol{\omega }\in \mathbb{R}^{d},\quad
 \mu \in \mathbb{R}.\label{spsd}
 \end{eqnarray}

 From Tauberian Theorems (see Leonenko and Olenko \cite{LeonenkoOlenko13}), under \textbf{Condition 4}, we get  convergence
 \begin{equation}
 f_{\mathcal{T}}(\mu )\sim c_{T}(\alpha )\frac{\mathcal{L}\left(\frac{1}{\mu}\right)}{|\mu|^{1-\alpha }},\quad \mu\to 0,\label{TT}
 \end{equation}
 \noindent for $0<\alpha <\frac{1}{2},$  where the Tauberian constant $c_{T}(\alpha )$ has been introduced in (\ref{TTC}). From (\ref{spsd}),  applying the  Wiener--It$\widehat{\mbox{o}}$ stochastic integral representation (see, e.g., Major \cite{Major81}, and Section 4.4.2 in Marinucci and  Peccati \cite{Marinucci}), we obtain isonormal representation:
 \begin{equation}
 Z(\mathbf{x},t)=\int_{\mathbb{R}^{d}}\int_{\mathbb{R}}\exp\left(i\mu t\right)\exp\left(i\left\langle \boldsymbol{\omega},\mathbf{x}\right\rangle\right)\sqrt{f_{\mathcal{T}}(\mu )f_{\mathcal{S}}(\boldsymbol{\omega })}W(d\mu,d\boldsymbol{\omega }),\label{wtr}\end{equation}
 \noindent with $W$ denoting complex--valued white noise measure.

For $\underset{d}{=}$ denoting the identity in probability distribution,     applying now the self--similarity of Gaussian white noise random measure
 \begin{eqnarray}&&
 W(a d\mu,b d\boldsymbol{\omega })\underset{d}{=}\sqrt{a}b^{d/2}W(d\mu,d\boldsymbol{\omega }),\quad \forall \mu\in \mathbb{R},\ \boldsymbol{\omega}\in \mathbb{R}^{d},\label{swnrm}
 \end{eqnarray}
 \noindent and the It$\widehat{\mbox{o}}$ formula (see, e.g., Dobrushin and Major \cite{Dobrushin}; Major \cite{Major81}), from equation (\ref{wtr}), we obtain
 \begin{eqnarray}&&
 Y_{2,T} =\frac{\int_{0}^{T}\int_{\mathcal{K}}\left(Z^{2}(\mathbf{x},t)-1\right)d\mathbf{x}dt}{T^{1-\alpha }\mathcal{L}(T)\sqrt{c_{\mathcal{K}}(2,\alpha )}}\nonumber\\
 &&\underset{d}{=} \int_{\mathbb{R}^{2}}^{\prime }\left[\int_{0}^{1}\exp\left(i(\mu_{1}+\mu_{2})t\right)dt\right]\left[\prod_{j=1}^{2}f_{\mathcal{T}}
 \left(\frac{\mu_{j}}{T}\right)\right]^{1/2}\frac{1}{T^{1-\alpha }\mathcal{L}(T)\sqrt{c_{\mathcal{K}}(2,\alpha )}}\nonumber\\
  & &\times\int_{\mathbb{R}^{2d}}^{\prime }\left[\int_{\mathcal{K}}\exp\left(i\left\langle\mathbf{x},\boldsymbol{\omega }_{1}+\boldsymbol{\omega}_{2}\right\rangle\right)d\mathbf{x}\right]\left[\prod_{j=1}^{2}f_{\mathcal{S}}(\boldsymbol{\omega}_{j})\right]^{1/2}
   W\left(d\mu_{1}, d\boldsymbol{\omega}_{1}\right)W\left(d\mu_{2}, d\boldsymbol{\omega}_{2}\right).\nonumber\\ \label{wir}
 \end{eqnarray}

 We  denote
 \begin{eqnarray}
   &&\mathcal{I}_{\mathcal{K}}=\frac{1}{c_{\mathcal{K}}(2,\alpha )}\int_{\mathbb{R}^{2d}}\left|\int_{\mathcal{K}}\exp\left(i\left\langle\mathbf{x},\boldsymbol{\omega }_{1}+\boldsymbol{\omega}_{2}\right\rangle\right)d\mathbf{x}\right|^{2}
 \left[\prod_{j=1}^{2}f_{\mathcal{S}}(\boldsymbol{\omega}_{j})\right]
   d\boldsymbol{\omega}_{1}d\boldsymbol{\omega}_{2}\nonumber\\
   &&=  \frac{|\mathcal{K}|^{2}}{c_{\mathcal{K}}(2,\alpha )}\mathbb{E}\left[C^{2}_{\mathcal{S}}\left(\|P_{1}-P_{2}\|\right)\right]\nonumber\\
       &&=\frac{|\mathcal{K}|^{2}}{c_{\mathcal{K}}(2,\alpha )}\int_{0}^{\mathcal{D}(\mathcal{K})}C^{2}_{\mathcal{S}}\left(z\right)\psi_{\rho_{\mathcal{K}}}(z)dz,
   \label{si}\end{eqnarray}
 \noindent where in the last identity, we have applied similar steps to (\ref{rvex}).

   From  (\ref{wir})  and  (\ref{si}),    we  then obtain
 \begin{eqnarray}
 &&\mathbb{E}\left[Y_{2,T}-\mathcal{R}
 \right]^{2}=[c_{T}(\alpha )]^{2}\mathcal{I}_{\mathcal{K}}\int_{\mathbb{R}^{2}}\left|\int_{0}^{1}\exp\left(i(\mu_{1}+\mu_{2})t\right)dt\right|^{2}
 \nonumber\\
 &&\hspace*{4cm}\times \frac{1}{|\mu_{1}\mu_{2}|^{1-\alpha }}Q_{T}(\mu_{1},\mu_{2})d\mu_{1}d\mu_{2},
 \label{cmq}
 \end{eqnarray}
 \noindent where
 \begin{eqnarray}
 Q_{T}(\mu_{1},\mu_{2})=\left(\frac{1}{c_{T}(\alpha )}|\mu_{1}\mu_{2}|^{\frac{1-\alpha}{2} }\prod_{j=1}^{2}f_{\mathcal{T}}^{1/2}\left(\frac{\mu_{j}}{T}\right)\frac{1}{T^{1-\alpha }\mathcal{L}(T)}-1\right)^{2}.\label{pcz}
 \end{eqnarray}

 Applying   Tauberian Theorems (see (\ref{TT})),   and Dominated Convergence Theorem,  as $T\to \infty,$ (\ref{cmq}) converges to zero for $\alpha \in (0,1/2).$
 Hence, the convergence in probability distribution of the random variable
 $Y_{2,T}$ to $\mathcal{R}$  holds (see Leonenko and Olenko \cite{LeonenkoOlenko14}, for more details).
 \end{proof}
 \subsection{Non--separable covariance functions}
 \label{nscf}
 Let $\varphi (v)\geq 0$  be a completely monotone function. That is, an infinite differentiable function satisfying
  $$(-1)^{n}\frac{d^{n}\varphi}{dv^{n}}(v)\geq 0,\quad v>0,\quad n\geq 0.$$

  By Bernstein's Theorem
  $$ \varphi (v)=\int_{0}^{v} \exp\left(-v\xi\right)\mu(d\xi),$$
  \noindent where $\mu $ is a positive measure over $[0,\infty)$ (see Gneiting \cite{Gneiting02}).

  Suppose further that $\psi :[0,\infty)\to[0,\infty)$ has completely monotone derivatives, i.e., it is a Bernstein function. The Gneiting class of spatiotemporal
  covariance functions is defined as follows (see Gneiting \cite{Gneiting02})
  \begin{eqnarray}
\widetilde{C}(\left\Vert \mathbf{x}-\mathbf{y}\right\Vert ,|t-s|)&=&\frac{1}{[\psi (|t-s|^{2})]^{d/2}}%
\varphi \left( \frac{\left\Vert \mathbf{x}-\mathbf{y}\right\Vert ^{2}}{\psi (|t-s|^{2})}\right) \nonumber\\
&=&C(z,\tau)=\frac{1}{[\psi (\tau ^{2})]^{d/2}}%
\varphi \left( \frac{z ^{2}}{\psi (\tau^{2})}\right)\nonumber\\
&& \quad \mathbf{x},\mathbf{y}\in \mathbb{R}^{d},\  t,s\in \mathbb{R
},  \ \tau, z\geq 0.\label{gneit}
\end{eqnarray}

It is known that  the one--parameter Mittag--Leffler function   $E_{\nu},$ for  \linebreak $0<\nu \leq 1,$
is a completely monotone function (see Feller \cite{Feller71}, p. 147), given by
\begin{equation*}
E_{\nu }(z)=\sum_{k=0}^{\infty}\frac{z^{k}}{
\Gamma (k\beta +1)} ,\quad z\in \mathbb{C},\quad 0<\beta <1
\end{equation*}
\noindent (see Erd\'elyi \emph{et al.} \cite{Erdelyi};  Haubold, Mathai and  Saxena
    \cite{Haubold}).

For every $\nu  \in (0,1),$ uniformly in  $x\in \mathbb{R}_{+},$ the following two--sided estimates are obtained  with optimal constants (see  Simon \cite{Simon2014}, Theorem 4):
\begin{eqnarray}
\frac{1}{1+\Gamma (1-\nu )x} &\leq & E_{\nu}(-x)\leq \frac{1}{1+[\Gamma (1+\nu)]^{-1}x}.
\label{eqimlf}
\end{eqnarray}

Note that the function
 \begin{equation*}
\psi (u)=(1+au^{\alpha })^{\beta },\text{ }a>0,\text{ }0<\alpha \leq 1,\text{ }0<\beta \leq 1,
\text{ }u\geq 0
\end{equation*}
\noindent has completely monotone derivatives (as well as the functions, for $b>1,$  $\psi_{2}(u)=\frac{\log(b+au^{\alpha })}{log(b)},$ and
$\psi_{3}(u)=\frac{(b+au^{\alpha })}{b(1+au^{\alpha })},$ for $0<b\leq 1$). Thus, we consider the Gneiting class of covariance functions
\begin{eqnarray}
C_{Z}(z ,\tau )&=&\frac{1}{(a\tau^{2\alpha }+1)^{\beta d/2}}E_{\nu }\left( -\frac{z ^{2\gamma }}{(a \tau  ^{2\alpha }+1)^{\beta \gamma }}\right). \nonumber\\
&& z,\tau\geq 0,\quad \nu ,\alpha ,\beta ,\gamma \in (0,1),\quad a>0. \label{GCCF}
\end{eqnarray}
From (\ref{eqimlf}), the following proposition is derived.
\begin{proposition}
\label{pr1} Under \textbf{Condition 1}, and for the Gneiting class of covariance functions introduced in (\ref{GCCF}), \textbf{Condition 2(ii)}
holds if $m=1,$ for  $0<2\alpha \beta  (d/2-\gamma )<1,$  and for $0<2\alpha \beta  (d/2-\gamma )<1/2$ if $m=2.$
\end{proposition}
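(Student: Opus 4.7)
The approach is to sandwich the Gneiting covariance (\ref{GCCF}) using the two-sided Mittag-Leffler bound (\ref{eqimlf}), and then verify the two parts of Condition 2 separately. Writing $g(\tau):=a\tau^{2\alpha}+1$, the expression (\ref{GCCF}) reads $C_Z(z,\tau)=g(\tau)^{-\beta d/2}E_\nu(-z^{2\gamma}/g(\tau)^{\beta\gamma})$; applying (\ref{eqimlf}) with $x=z^{2\gamma}/g(\tau)^{\beta\gamma}$ sandwiches $C_Z(z,\tau)$ between positive $\nu$-dependent constant multiples of
$$\frac{1}{g(\tau)^{\beta(d/2-\gamma)}\bigl[g(\tau)^{\beta\gamma}+z^{2\gamma}\bigr]}.$$
Condition 2(i) follows immediately from the upper half of (\ref{eqimlf}): since $E_\nu\leq 1$ and $g(\tau)\sim a\tau^{2\alpha}\to\infty$, one has $\sup_{z\in[0,\mathcal{D}(\mathcal{K})]}C_Z(z,\tau)\leq g(\tau)^{-\beta d/2}\to 0$ as $\tau\to\infty$.

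For Condition 2(ii) I would use the lower half of (\ref{eqimlf}) to obtain
$$C_Z^m(z,\tau)\geq\frac{c_\nu^m}{g(\tau)^{m\beta(d/2-\gamma)}\bigl[g(\tau)^{\beta\gamma}+\Gamma(1-\nu)z^{2\gamma}\bigr]^m},$$
combine with the lower estimate $\psi_{\rho_\mathcal{K}}(z)\geq c_\mathcal{K}z^{d-1}$ on a neighborhood of the origin (cf.\ the explicit ball formula (\ref{af4})), and perform the substitution $z^{2\gamma}=g(\tau)^{\beta\gamma}w$ in the inner $z$-integral to expose the slowly decaying prefactor $g(\tau)^{-m\beta(d/2-\gamma)}$. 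Inserting the resulting lower bound for $\int_0^{\mathcal{D}(\mathcal{K})}C_Z^m\psi_{\rho_\mathcal{K}}dz$ into the expression (\ref{con}) reduces Condition 2(ii) to
$$\frac{1}{T^\delta}\int_0^T\Bigl(1-\frac{\tau}{T}\Bigr)g(\tau)^{-m\beta(d/2-\gamma)}d\tau\longrightarrow\infty,\qquad T\to\infty,$$
and since $g(\tau)^{-m\beta(d/2-\gamma)}\sim a^{-m\beta(d/2-\gamma)}\tau^{-2m\alpha\beta(d/2-\gamma)}$, this double integral has order $T^{1-2m\alpha\beta(d/2-\gamma)}$ for large $T$. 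After dividing by $T^\delta$, divergence for some $\delta\in(0,1)$ occurs precisely when $0<2m\alpha\beta(d/2-\gamma)<1$; specializing $m=1$ and $m=2$ yields the stated parameter ranges.

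The main obstacle is the careful evaluation of the inner $z$-integral after the substitution $z^{2\gamma}=g(\tau)^{\beta\gamma}w$, since as $\tau\to\infty$ the transformed upper limit $\mathcal{D}(\mathcal{K})^{2\gamma}/g(\tau)^{\beta\gamma}$ shrinks to zero and further decay factors can be absorbed from the integrand. One must isolate the $z$-range in which $z^{2\gamma}$ is comparable to $g(\tau)^{\beta\gamma}$ in order to retain the prefactor $g(\tau)^{-m\beta(d/2-\gamma)}$ as the slowest-decaying contribution, rather than the naive pointwise rate $\alpha\beta d$ that one would obtain simply by setting $z=0$ in the denominator of the lower bound for $C_Z$.
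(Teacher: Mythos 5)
You follow the same route as the paper: both proofs start from the two--sided Mittag--Leffler estimate (\ref{eqimlf}) applied to (\ref{GCCF}) (your verification of Condition 2(i) is extra, since the proposition only asserts 2(ii), but harmless). The genuine gap is precisely at the step you flag as ``the main obstacle,'' and your proposed resolution does not work. After the substitution $z^{2\gamma}=g(\tau)^{\beta\gamma}w$ the range $[0,\mathcal{D}(\mathcal{K})]$ is mapped onto $[0,\mathcal{D}(\mathcal{K})^{2\gamma}/g(\tau)^{\beta\gamma}]$, which shrinks to $\{0\}$: the region in which $z^{2\gamma}$ is comparable to $g(\tau)^{\beta\gamma}$, namely $z\asymp g(\tau)^{\beta/2}\to\infty$, does not intersect the fixed compact body $\mathcal{K}$ for large $\tau$, so it cannot be ``isolated.'' Carrying the substitution through with $\psi_{\rho_{\mathcal{K}}}(z)\asymp z^{d-1}$ near the origin gives
\[
\int_{0}^{\mathcal{D}(\mathcal{K})}C_{Z}^{m}(z,\tau)\psi_{\rho_{\mathcal{K}}}(z)\,dz
\;\asymp\; g(\tau)^{-(m-1)\beta d/2}\int_{0}^{\mathcal{D}(\mathcal{K})^{2\gamma}/g(\tau)^{\beta\gamma}}
\frac{w^{\frac{d}{2\gamma}-1}}{\bigl(1+\Gamma(1-\nu)w\bigr)^{m}}\,dw
\;\asymp\; g(\tau)^{-m\beta d/2},
\]
i.e.\ the inner integral decays at the rate $\tau^{-m\alpha\beta d}$. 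This is consistent with the elementary observation that $E_{\nu}\bigl(-z^{2\gamma}/g(\tau)^{\beta\gamma}\bigr)\to 1$ uniformly on $[0,\mathcal{D}(\mathcal{K})]$: the ``naive pointwise rate'' is the true rate on a fixed compact set. Consequently your reduction of Condition 2(ii) to the divergence of $T^{-\delta}\int_{0}^{T}(1-\tau/T)\,g(\tau)^{-m\beta(d/2-\gamma)}\,d\tau$ is not justified; the argument as written only delivers the restriction $m\alpha\beta d<1$, not $2m\alpha\beta(d/2-\gamma)<1$.

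For comparison, the paper's proof uses the same one--sided lower bound, rewrites $1/(1+\Gamma(1-\nu)z^{2\gamma}g^{-\beta\gamma})$ as $g^{\beta\gamma}/(g^{\beta\gamma}+\Gamma(1-\nu)z^{2\gamma})$ with $g=1+aT^{2\alpha}\tau^{2\alpha}$, and extracts the prefactor $T^{2(1-\alpha\beta(d/2-\gamma))}$, stopping at the displayed inequality (\ref{pde2h}). That chain of inequalities is valid, but the residual double integral there still contains $[1+aT^{2\alpha}\tau^{2\alpha}]^{\beta\gamma}$ in its denominator and hence carries a hidden factor of order $T^{-2\alpha\beta\gamma}$, so the concluding sentence faces exactly the difficulty you identified. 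In other words, you have not missed a device that the paper supplies; to obtain the exponent $2\alpha\beta(d/2-\gamma)$ one would need the spatial integration region to grow with $\tau$ so that the scaling zone $z\asymp g(\tau)^{\beta/2}$ contributes, which is not the setting here. On a fixed $\mathcal{K}$ the honest version of your computation establishes Condition 2(ii) under $m\alpha\beta d<1$, and you should either prove that statement instead or explain how the extra $2\alpha\beta\gamma$ in the exponent is to be recovered.
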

\begin{proof}
The proof follows straightforward from  equations (\ref{eqimlf}) and  (\ref{GCCF}). Specifically, for $m=1,$
\begin{eqnarray}&&\nonumber\\
&&\sigma_{1,\mathcal{K}}^{2}(T)=2T^{2}|\mathcal{K}|^{2}\int_{[0,1]}(1-\tau)\int_{0}^{\mathcal{D}(\mathcal{K})}C_{Z}(z ,T\tau )\psi_{\rho_{\mathcal{K}}}(z)dzd\tau\nonumber\\
&&=2T^{2}|\mathcal{K}|^{2}\int_{[0,1]}(1-\tau)\int_{0}^{\mathcal{D}(\mathcal{K})}\frac{1}{(a[T\tau]^{2\alpha }+1)^{\beta d/2}}
\nonumber\\
&&\hspace*{1cm}\times
E_{\nu }\left( -\frac{z ^{2\gamma }}{(a [T\tau ] ^{2\alpha }+1)^{\beta \gamma }}\right)\psi_{\rho_{\mathcal{K}}}(z)dzd\tau\nonumber\\
& &\geq 2T^{2}|\mathcal{K}|^{2}\int_{[0,1]}(1-\tau)\int_{0}^{\mathcal{D}(\mathcal{K})}\frac{1}{(a[T\tau]^{2\alpha }+1)^{\beta d/2}}
\nonumber\\
&&\hspace*{3cm}\times
\frac{1}{\left[ 1+\Gamma (1-\nu)\frac{z^{2\gamma }}{\left[1+aT^{2\alpha}\tau ^{2\alpha}\right]^{\beta \gamma}}\right]}\psi_{\rho_{\mathcal{K}}}(z)dzd\tau
\nonumber\end{eqnarray}
\begin{eqnarray}
&&\geq 2T^{2}|\mathcal{K}|^{2}\int_{[0,1]}(1-\tau)\int_{0}^{\mathcal{D}(\mathcal{K})}\frac{1}{(a[T\tau]^{2\alpha }+1)^{\beta d/2}}
\nonumber\\
&&\hspace*{3cm}\times\frac{a^{\beta\gamma}T^{2\alpha \beta \gamma }\tau^{2\alpha \beta \gamma } }{\left[ \left[1+aT^{2\alpha}\tau ^{2\alpha}\right]^{\beta \gamma}+\Gamma (1-\nu)z^{2\gamma }\right]}\psi_{\rho_{\mathcal{K}}}(z)dzd\tau
\nonumber\\
&&=2T^{2\left(1-\alpha \beta \left(\frac{d}{2}-\gamma \right)\right)}|\mathcal{K}|^{2}
\int_{[0,1]}(1-\tau)\int_{0}^{\mathcal{D}(\mathcal{K})}\frac{1}{\left(a\tau^{2\alpha }+\frac{1}{T^{2\alpha}}\right)^{\beta d/2}}
\nonumber\\
&&\hspace*{3cm}\times\frac{a^{\beta\gamma}\tau^{2\alpha \beta \gamma } }{\left[ \left[1+aT^{2\alpha}\tau ^{2\alpha}\right]^{\beta \gamma}+\Gamma (1-\nu)z^{2\gamma }\right]}\psi_{\rho_{\mathcal{K}}}(z)dzd\tau.
\nonumber\\
\label{pde2h}
\end{eqnarray}

From (\ref{pde2h}), \textbf{Condition(ii)} holds for $\alpha \beta \left(\frac{d}{2}-\gamma \right) <1.$

 In a similar way to (\ref{pde2h}), it can be proved that for $m=2,$ \textbf{Condition(ii)} also holds for $\alpha \beta \left(\frac{d}{2}-\gamma \right)<1/2.$
\end{proof}

\medskip

As a direct consequence of Proposition \ref{pr1},  we obtain that Theorems \ref{th1}  and \ref{th2}  hold for the family of spatiotemporal Gaussian random fields with covariance function (\ref{GCCF}).

Similar assertions hold for the family of spatiotemporal covariance functions
\begin{eqnarray}&&
\widetilde{C}_{Z}(\mathbf{z},\tau )=\frac{\sigma ^{2}}{[\psi (\tau ^{2})]^{d/2}}%
\varphi \left( \frac{\left\Vert \mathbf{z}\right\Vert ^{2}}{\psi (\tau ^{2})}\right),
\ \sigma ^{2}\geq 0,\ (\mathbf{z},\tau )\in \mathbb{R}^{d}\times \mathbb{R}\nonumber\\&&
\varphi (u)=\frac{1}{(1+cu^{\gamma })^{\nu }},\ u\geq 0,\ c>0, \ 0<\gamma \leq 1, \ \nu >0\nonumber\\
\psi (u)&=&(1+au^{\alpha })^{\beta },\text{ }a>0,\ 0<\alpha \leq 1,\ 0<\beta \leq 1,\ u\geq 0,\label{fcf}
\end{eqnarray}
\noindent for $\alpha \beta \left(\frac{d}{2}-\gamma \nu \right) <1$ if $m=1,$ and for $\alpha \beta \left(\frac{d}{2}-\gamma \nu \right)<1/2$
if $m=2.$

\section{Discussion}
As commented in the Introduction, the main contribution of this paper relies on  deriving a general reduction principle in  Theorem \ref{th3},  beyond the regularly varying condition on
the spatiotemporal  covariance function of the underlying Gaussian random field.  Hence, we can analyze  a larger class of spatiotemporal covariance functions. Particularly, some examples of    Gneiting  class are considered (see Gneiting \cite{Gneiting02}). This class of covariance functions is popular in many applications, including Meteorology or Earth sciences, among  others.

By considering homogeneous and isotropic Gaussian random fields restricted to a spatial convex compact
set  evolving over time, this paper applies an extrinsic  random field approach, alternatively to the intrinsic spherical one  adopted in  Marinucci, Rossi and Vidotto \cite{MarinucciRV}. Thus, the isonormal representation of the underlying spatiotemporal Gaussian random field on  $\mathbb{R}^{d}\times \mathbb{R},$ and the characteristic function of the uniform probability distribution on a temporal  interval and a spatial convex compact set allow the consideration of a continuous spectral based approach, in the derivation of limit  results  in our framework (see, e.g., Proposition \ref{p41}).

A time--varying pure point spectral approach is considered in Marinucci, Rossi and Vidotto \cite{MarinucciRV}, based on projection onto the orthonormal basis of spherical harmonics. Different ranges of dependence are then assumed at different  spatial resolution levels in the sphere. In our paper,  under the temporal decay velocity of the space--time covariance function established in \textbf{Condition 2}, a general reduction principle is derived in  Theorem \ref{th3}, providing the limiting distribution of properly normalised integrals of non-linear transformations of spatiotemporal Gaussian random fields. Theorem \ref{th1} constitutes a  particular case of Theorem \ref{th3}, where the scaling also depends on the threshold $u,$  that provides a similar scenario to Theorem 1 in Marinucci, Rossi and Vidotto \cite{MarinucciRV}, when zero-th order multipole component is long--memory, and all the other multipoles have asymptotically smaller variance.

Note also that Proposition \ref{pr1} of this paper corresponds to the separable case in time and space which is different situation to the non--separable case addressed in Marinucci, Rossi and Vidotto \cite{MarinucciRV}. Furthermore,  Marinucci, Rossi and Vidotto \cite{MarinucciRV} consider  different orthonormal bases for space and time, respectively in terms of the spherical harmonics and the complex exponentials. These bases do not provide a  diagonal spectral representation of the space--time covariance function of the underlying Gaussian random field. The composite Rosenblatt distribution then arises from the set of multipoles where the larger dependence range (long--memory) is displayed (as in reduction theorems). This non--diagonal representation induces a similar effect to considering Hermite rank $m=2,$ in the case of separable covariance functions in space and time, admitting a diagonal representation in terms of the complex exponentials in space and time (see Proposition \ref{p41}).

In a subsequent paper, our results can be extended to the non--separable case, in terms of  a bounded spatially varying long--memory parameter satisfying \textbf{Condition 2}(ii). This could be the case, for example,  of an extended version of Proposition \ref{p41}, in terms of non--separable covariance functions,  involving a bounded spatial  frequency  varying $\alpha (\cdot )$ parameter. In that case, when the  supremum of $\alpha (\cdot )$ over the spatial frequencies satisfies  \textbf{Condition 2}(ii), Theorem \ref{th3} holds under  \textbf{Condition 2}(i).

\vspace*{1cm}

\noindent \textbf{Acknowledgements}. \footnotesize{ This work has been supported in part by projects  MCIN/ AEI/PGC2018-099549-B-I00, CEX2020-001105-M MCIN/AEI/10.13039/501100011033, \linebreak and  by grant A-FQM-345-UGR18 cofinanced by ERDF Operational Programme 2014-2020, and the Economy and Knowledge Council of the Regional Government of Andalusia, Spain. N. Leonenko  was partially supported by LMS grant 42997, ARC grant DP220101680. N. Leonenko  and M.D. Ruiz--Medina where supported the Isaak Newton Institute (Cambridge) Program \emph{Fractional Differential Equations}.

}

\end{document}